\author{I.~A.~Krepkiy}
\title{The sandpile groups of graphs of classes $CH_n (a_1, ..., a_n)$.}
\begin{document}

\vskip-3cm

\maketitle
\thispagestyle{empty}
\vskip-8cm
\rightline{}
\vskip8cm
\vskip-7cm{
\small{
\begin{center}
\vskip-1.5cm
Mathematics and Mechanics Faculty,\\
St. Petersburg State University \\
\end{center}

}
}
\begin{center}
{\em \href{mailto:feb418@gmail.com}{\texttt{feb418@gmail.com}}}
\end{center}

\vspace{3cm}

\begin{abstract}
\noindent The article considers the procedure of connection of graphs to the edges of a cyclic graph and its influence on the sandpile group of the graph thus obtained. A series of classes of graphs $CH_n(a_1,...,a_n)$ is defined. Recurrent and non-recurrent formulas for calculating the sandpile groups of all graphs of classes $CH_n (a_1, ..., a_n)$ are proposed.
\vskip 10pt
\noindent\textbf{Keywords:} sandpile groups, Smith normal form, classes of graphs $CH_n (a_1, ..., a_n)$.
\end{abstract}

\newpage

\section{Notations}

Let $M$ be an arbitrary square integer matrix. Denote the multiset of numbers arranged on the diagonal of the Smith normal form of $M$ (\cite{Smith}) by $\overline{M}$.\\
We will use (as in \cite{Krep}) the definition of the sandpile group of a graph in terms of the Smith normal form of the Laplacian matrix of this graph:
\newtheorem{definition}{Definition}
\begin{definition}
The sandpile group of a graph $G$ is the group $S(G) \cong \bigoplus \limits_{a \in (\overline{M} \setminus \{0\})} C_a$, where $M$ is a Laplacian matrix of $G$. (Here and below $C_n$ is a cyclic group of order $n$.)\\
\end{definition}
(More detailed information on the two classic definitions of the sandpile groups can be found in \cite{Def}.)\\
One can write $S(G) \cong \bigoplus \limits_{a \in \overline{M'}} C_a$, where $M'$ is obtained from $M$ by removing the row and column containing an arbitrary diagonal element (due to the properties of the Laplacian matrix).\\
Also, $\overline{A} = \overline{B}$, where the matrix $B$ is obtained from the matrix $A$ by a single operation of addition/subtraction of one column/row to another (due to the properties of the Smith normal form). Hence, the permutation of rows or columns of a matrix, as well as their multiplication by $-1$ does not affect the Smith normal form of this matrix.\\

\section{Main theorem}

Let $F$ and $G$ be some graphs. Let $f_1$, $f_2$ be non-negative integer-valued functions on the set of vertices of $F$, and $g_1$, $g_2$ be non-negative integer-valued functions on the set of vertices of $G$. Denote by $T$ a cyclic graph consisting of $n$ vertices. We can label the vertices of these graphs by natural numbers as follows:\\
1. $r$ vertices of $F$ are numbered from $1$ to $r$\\
2. $n$ vertices of $T$ are numbered from $r+1$ to $r+n$ in the order they appear in the loop\\
3. $s$ vertices of $G$ are numbered from $r+n+1$ to $r+n+s$\\

Now, for $i \in \mathbb{Z}, 0 \leq i \leq n-2$ we construct a new graph $H_i$ as follows:\\
1. connect each vertex $v$ of $G$ with $r+n-1$-th vertex of $T$ by $g_1(v)$ edges\\
2. connect each vertex $v$ of $G$ with $r+n$-th vertex of $T$ by $g_2(v)$ edges\\
3. connect each vertex $v$ of $F$ with $r+i$-th vertex of $T$ by $f_1(v)$ edges (if $1 \leq i \leq n-2$)\\
or connect each vertex $v$ of $F$ with $r+n$-th vertex of $T$ by $f_1(v)$ edges (if $i = 0$)\\
4. connect each vertex $v$ of $F$ with $r+i+1$-th vertex of $T$ by $f_2(v)$ edges\\

For example, if the graphs $F$, $T$, $G$ are as shown in Fig.$\ref{ris:PartF}-\ref{ris:PartG}$ and functions $f_1, f_2, g_1, g_2$ are such that\\
$f_1(1)=1, f_1(2)=0, f_1(3)=0$,\\
$f_2(1)=0, f_2(2)=0, f_2(3)=2$,\\
$g_1(10)=1, g_1(11)=1$,\\
$g_2(10)=1, g_2(11)=1$\\
then the resulting graphs $H_1$ and $H_2$ are shown in Fig. $\ref {ris:Sum1}, \ref {ris:Sum2}$.

\begin{center}
\begin{minipage}[h]{0.3\linewidth}
\center{
\begin{tikzpicture}[scale=0.6]
\GraphInit[vstyle=Classic]
\tikzset{VertexStyle/.style = {shape = circle,fill = black,minimum size = 6pt,inner sep=0pt}}
\Vertex[x=0,y=0]{1}
\Vertex[x=-1,y=0.5,Lpos=180]{2}
\Vertex[x=0,y=1]{3}
\Edges(1,2,3,1)
\end{tikzpicture}
}
\captionof{figure}{$F$}
\label{ris:PartF}
\end{minipage}
\hfill
\begin{minipage}[h]{0.3\linewidth}
\center{
\begin{tikzpicture}[scale=0.6]
\GraphInit[vstyle=Classic]
\tikzset{VertexStyle/.style = {shape = circle,fill = black,minimum size = 6pt,inner sep=0pt}}
\Vertex[x=0,y=0]{9}
\Vertex[x=-1,y=-0.5,Lpos=-90]{4}
\Vertex[x=-2,y=0,Lpos=180]{5}
\Vertex[x=-2,y=1,Lpos=180]{6}
\Vertex[x=-1,y=1.5,Lpos=90]{7}
\Vertex[x=0,y=1]{8}
\Edges(4,5,6,7,8,9,4)
\end{tikzpicture}
}
\captionof{figure}{$T$}
\label{ris:PartT}
\end{minipage}
\hfill
\begin{minipage}[h]{0.3\linewidth}
\center{
\begin{tikzpicture}[scale=0.6]
\GraphInit[vstyle=Classic]
\tikzset{VertexStyle/.style = {shape = circle,fill = black,minimum size = 6pt,inner sep=0pt}}
\Vertex[x=0,y=0,Lpos=180]{10}
\Vertex[x=1,y=0]{11}
\Edges(10,11)
\end{tikzpicture}
}
\captionof{figure}{$G$}
\label{ris:PartG}
\end{minipage}
\end{center}

\begin{center}
\begin{minipage}[h]{0.4\linewidth}
\center{
\begin{tikzpicture}[scale=0.6]
\GraphInit[vstyle=Classic]
\tikzset{VertexStyle/.style = {shape = circle,fill = black,minimum size = 6pt,inner sep=0pt}}
\Vertex[x=0,y=-1,Lpos=-90]{1}
\Vertex[x=-1,y=-0.5,Lpos=180]{2}
\Vertex[x=0,y=0,Lpos=90]{3}
\Edges(1,2,3,1)
\Vertex[x=3,y=0,Lpos=-90]{9}
\Vertex[x=2,y=-0.5,Lpos=-90]{4}
\Vertex[x=1,y=0,Lpos=30]{5}
\Vertex[x=1,y=1,Lpos=120]{6}
\Vertex[x=2,y=1.5,Lpos=90]{7}
\Vertex[x=3,y=1,Lpos=90]{8}
\Edges(4,5,6,7,8,9,4)
\Vertex[x=4,y=0.5,Lpos=180]{10}
\Vertex[x=5,y=0.5]{11}
\Edges(10,11)
\Edge[style={bend left}](3)(5) \Edge[style={bend right}](3)(5)
\Edge(1)(4)
\Edge(9)(10) \Edge(8)(10)
\Edge[style={bend right}](9)(11)  \Edge[style={bend left}](8)(11)
\end{tikzpicture}
}
\captionof{figure}{$H_1$}
\label{ris:Sum1}
\end{minipage}
\hfill
\begin{minipage}[h]{0.4\linewidth}
\center{
\begin{tikzpicture}[scale=0.6]
\GraphInit[vstyle=Classic]
\tikzset{VertexStyle/.style = {shape = circle,fill = black,minimum size = 6pt,inner sep=0pt}}
\Vertex[x=0,y=0,Lpos=-90]{1}
\Vertex[x=-1,y=0.5,Lpos=180]{2}
\Vertex[x=0,y=1,Lpos=90]{3}
\Edges(1,2,3,1)
\Vertex[x=3,y=0,Lpos=-90]{9}
\Vertex[x=2,y=-0.5,Lpos=-90]{4}
\Vertex[x=1,y=0,Lpos=-90]{5}
\Vertex[x=1,y=1,Lpos=90]{6}
\Vertex[x=2,y=1.5,Lpos=90]{7}
\Vertex[x=3,y=1,Lpos=90]{8}
\Edges(4,5,6,7,8,9,4)
\Vertex[x=4,y=0.5,Lpos=180]{10}
\Vertex[x=5,y=0.5]{11}
\Edges(10,11)
\Edge[style={bend left}](3)(6) \Edge[style={bend right}](3)(6)
\Edge(1)(5)
\Edge(9)(10) \Edge(8)(10)
\Edge[style={bend right}](9)(11)  \Edge[style={bend left}](8)(11)
\end{tikzpicture}
}
\captionof{figure}{$H_2$}
\label{ris:Sum2}
\end{minipage}
\end{center}

\renewcommand{\proofname}{Proof}

\newtheorem{thm}{Theorem}
\begin{thm}
The structure of the sandpile group of $H_i$ does not depend on the choice of $i$.
\end{thm}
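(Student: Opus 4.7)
My plan is to prove the theorem by showing that for every $0 \le i \le n-3$ the Laplacians $L_i := L(H_i)$ and $L_{i+1} := L(H_{i+1})$ have the same Smith normal form; by Definition~1 this yields $S(H_i) \cong S(H_{i+1})$, and iterating covers all admissible indices $i \in \{0,1,\ldots,n-2\}$. As was noted in Section~1, this reduces to transforming $L_i$ into $L_{i+1}$ by a sequence of integer row and column operations of the types: addition of one row (or column) to another, sign flip of a row (or column), and permutation of rows (or columns).

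I will write $L_i$ in $3\times 3$ block form with respect to the partition $V(H_i) = V(F) \sqcup V(T) \sqcup V(G)$:
\[
L_i = \begin{pmatrix} A & B_i & 0 \\ B_i^T & C_i & D \\ 0 & D^T & E \end{pmatrix},
\]
where $A = L_F + \mathrm{diag}(f_1+f_2)$, $E = L_G + \mathrm{diag}(g_1+g_2)$, and $D$ (carrying $-g_1$ in row $n-1$ and $-g_2$ in row $n$) do not depend on $i$. All the $i$-dependence is carried by the rectangular block $B_i$ (with column $i$, or column $n$ when $i=0$, equal to $-f_1$ and column $i+1$ equal to $-f_2$, the other columns being zero) and by the diagonal of the cyclic block $C_i$, which receives the corresponding degree corrections $|f_1|$ and $|f_2|$ at those two positions. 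Consequently the passage $L_i \to L_{i+1}$ is localised to the $F$-rows/columns together with the three consecutive cycle rows and columns indexed by $r+i,\ r+i+1,\ r+i+2$.

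The heart of the argument is to use the cycle row at vertex $r+i+1$, whose $T$-entries are $(-1,\,2+|f_2|,\,-1)$ in positions $(r+i, r+i+1, r+i+2)$, as a \emph{transfer relation} that slides the $F$-attachment one step around the cycle. Paired row and column operations involving this row (and its neighbours at $r+i$ and $r+i+2$) cancel the $-f_1(v)$ entry at position $(v,r+i)$, re-create it at $(v,r+i+1)$, and move the $-f_2(v)$ entry from $(v,r+i+1)$ to $(v,r+i+2)$; the transposed operations do the same for the symmetric $T$-$F$ block. A direct check then shows that the diagonal corrections of $C_i$ have migrated from positions $(i,i{+}1)$ to $(i{+}1,i{+}2)$, so that the transformed matrix is exactly $L_{i+1}$. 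The boundary transition $i=0\to i=1$ is handled in the same way, with the wraparound edge $(r+n, r+1)$ of the cycle playing the role of $(r+i, r+i+1)$.

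I expect the main obstacle to be keeping the whole sequence of operations within the integers, which is required for Smith-normal-form invariance: the degree corrections $|f_1|,|f_2|$ and the off-diagonal entries $-f_1(v),-f_2(v)$ naively interact in a way that would call for rational multipliers. This is circumvented by performing the operations ``one $F$-edge at a time'' and then summing over $v\in V(F)$, so that the integer entries $-f_1(v),-f_2(v)$ and the totals $|f_1|,|f_2|$ arise as accumulated contributions rather than as prescribed coefficients. The careful tracking of the induced side-effects on the $A$-block produced by the symmetric column operations, and the verification that these side-effects exactly cancel so that $A$ is left unchanged, is the most delicate technical point.
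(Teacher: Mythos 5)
Your overall reduction is sound (consecutive indices; invariance of the Smith normal form under integer row/column additions, sign changes and permutations), but the central step of your plan --- the ``transfer relation'' that is supposed to slide the $F$-attachment from cycle positions $r+i,\ r+i+1$ to $r+i+1,\ r+i+2$ by paired row and column operations on the full Laplacian --- is exactly the part that is not carried out, and as described it does not work. The natural pivots for clearing the entry $-f_1(v)$ at position $(v,r+i)$ are the cycle rows $r+i\pm1$, and these rows themselves carry the incidence data of $F$: the row of vertex $r+i+1$ contains $-f_2(w)$ in every $F$-column $w$ and the corrected diagonal entry $2+\sum_w f_2(w)$. Adding $-f_1(v)$ times that row to the row of $v$ does clear $(v,r+i)$, but it produces $-f_2(v)-2f_1(v)-f_1(v)\sum_w f_2(w)$ at $(v,r+i+1)$ rather than the desired $-f_1(v)$, deposits $+f_1(v)$ at $(v,r+i+2)$, and, most seriously, pollutes the $F\times F$ block with the quadratic cross terms $f_1(v)f_2(w)$. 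The symmetric column operations add the same cross terms at the transposed positions $(w,v)$ instead of cancelling them, and performing the operations ``one edge at a time'' merely decomposes the same nonzero total $\sum_{v,w}f_1(v)f_2(w)$ into summands. You assert that these side effects ``exactly cancel so that $A$ is left unchanged,'' but no mechanism for the cancellation is given and I do not see one; this is a genuine gap, not a deferred routine verification. The wraparound transition $i=0\to 1$ is likewise only asserted.

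The paper sidesteps this difficulty entirely by a device your plan does not use: it reduces the two Laplacians at \emph{different} vertices, deleting the row and column of cycle vertex $r+n$ from $L(H_k)$ and of cycle vertex $r+1$ from $L(H_{k+1})$. Cutting the cycle at two different places performs the rotation for free: in the two reduced matrices the $F$-blocks (and their positions inside the now tridiagonal path block) literally coincide, and the only remaining discrepancy sits at the $G$-end, where it is removed by adding the $G$-rows and the last cycle row/column to the two cycle rows/columns carrying $y'$ and $z'$, followed by a sign change of one row and column. If you wish to keep a direct transformation of the full (or identically reduced) Laplacians, you must exhibit the unimodular operations explicitly and in particular account for the $f_1(v)f_2(w)$ cross terms; the cleaner route is to exploit the freedom to delete different vertices before comparing Smith normal forms.
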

\begin{proof}
It suffices to show that $S(H_k) \cong S(H_ {k +1})$ $\forall k \in \mathbb{Z}, 0 \leq k \leq n-3$.

Here we only consider the case $1 \leq k \leq n-4$. (For cases with $k = 0$ and $k = n-3$ the proof does not change, but the matrices are somewhat different from those described below.)\\

We denote the Laplacian matrices of $H_k$ and $H_ {k +1}$ by $A$ and $B$, respectively:\\

\begin{center}
\begin{minipage}[h]{0.4\linewidth}
\center{
\scalefont{1}
$A=\left(
\begin{array}{ccc}
A_{1,1} & A_{1,2} & A_{1,3} \\
A_{2,1} & A_{2,2} & A_{2,3} \\
A_{3,1} & A_{3,2} & A_{3,3} \\
\end{array}
\right)$
}
\end{minipage}
\hfill
\begin{minipage}[h]{0.4\linewidth}
\center{
\scalefont{1}
$B=\left(
\begin{array}{ccc}
B_{1,1} & B_{1,2} & B_{1,3} \\
B_{2,1} & B_{2,2} & B_{2,3} \\
B_{3,1} & B_{3,2} & B_{3,3} \\
\end{array}
\right)$
}
\end{minipage}
\end{center}

Both matrices are divided into $9$ blocks.\\

Block $(1,1)$ describes the Laplacian matrix of $F$, which is modified by subtraction of $p_i$ from the diagonal elements.\\

\begin{figure}[h]
\center{
\scalefont{1}
$A_{1,1}=B_{1,1}=\left(
\begin{array}{ccccc}
-p_1 & & & & \\
 &-p_2 & & & \\
 & & \ddots & &\\
 & & &-p_{r-1} &\\
 & & & &-p_r\\
\end{array}
\right)$
}
\end{figure}

Block $(3,3)$ describes the Laplacian matrix of $G$, which is modified by subtraction of $q_i$ from the diagonal elements.\\

\begin{figure}[h]
\center{
\scalefont{1}
$A_{3,3}=B_{3,3}=\left(
\begin{array}{ccccc}
-q_1 & & & & \\
 &-q_2 & & & \\
 & & \ddots & &\\
 & & &-q_{s-1} &\\
 & & & &-q_s\\
\end{array}
\right)$
}
\end{figure}

(In blocks (1,1) and (3,3) we only show the numbers that should be subtracted from the diagonal elements of the corresponding matrices.)\\

Block $(2,2)$ describes the Laplacian matrix of $T$, which is modified by subtraction of $w', x', y', z'$ from the diagonal elements corresponding to the vertices connected to the vertices of $F$ and $G$.\\

\begin{figure}[h]
\center{
\scalefont{1}
$A_{2,2}=\left(
\begin{array}{cccccccccccc}
-2 & 1 & & & & & & & & & & 1 \\
1 & -2 & 1 & & & & & & & & & \\
 & 1 & \ddots & \ddots & & & & & & & & \\
 & & \ddots & -2 & 1 & & & & & & & \\
 & & & 1 & -2-w' & 1 & & & & & & \\
 & & & & 1 & -2-x' & 1 & & & & & \\
 & & & & & 1 & -2 & 1 & & & & \\
 & & & & & & 1 & -2 & 1 & & & \\
 & & & & & & & 1 & \ddots & \ddots & & \\
 & & & & & & & & \ddots & -2 & 1 & \\
 & & & & & & & & & 1 & -2-y' & 1 \\
 1 & & & & & & & & & & 1 & -2-z' \\
\end{array}
\right)$
}
\end{figure}

\begin{figure}[h]
\center{
\scalefont{1}
$B_{2,2}=\left(
\begin{array}{cccccccccccc}
-2 & 1 & & & & & & & & & & 1 \\
1 & -2 & 1 & & & & & & & & & \\
 & 1 & \ddots & \ddots & & & & & & & & \\
 & & \ddots & -2 & 1 & & & & & & & \\
 & & & 1 & -2 & 1 & & & & & & \\
 & & & & 1 & -2-w' & 1 & & & & & \\
 & & & & & 1 & -2-x' & 1 & & & & \\
 & & & & & & 1 & -2 & 1 & & & \\
 & & & & & & & 1 & \ddots & \ddots & & \\
 & & & & & & & & \ddots & -2 & 1 & \\
 & & & & & & & & & 1 & -2-y' & 1 \\
 1 & & & & & & & & & & 1 & -2-z' \\
\end{array}
\right)$
}
\end{figure}

Here the numbers $x', y', z', w'$ are equal to the number of edges that connect the graphs $F$ and $G$ with the respective vertices of the cycle $T$.\\

Blocks $(1,2)$ and $(2,1)$ contain numbers $x_i$ and $w_i$, which correspond to the number of edges connecting different vertices of $F$ with two vertices of $T$.\\

Block $(1,2)$:

\begin{figure}[h]
\center{
\scalefont{1}
$\left(
\begin{array}{cccccccc}
0 & \cdots & 0 & w_1 & x_1 & 0 & \cdots & 0 \\
\vdots & \ddots &  \vdots& \vdots & \vdots & \vdots & \ddots & \vdots \\
0 & \cdots & 0 & w_r & x_r & 0 & \cdots & 0 \\
\end{array}
\right)$
}
\end{figure}

The nonzero columns of $A_{1,2}$ are numbered as $k$ and $k+1$, and the nonzero columns of $B_{1,2}$ are numbered as $k+1$ and $k+2$. $A_{2,1}=A^\intercal_{1,2}$, $B_{2,1}=B^\intercal_{1,2}$.\\

\newpage

Blocks $(2,3)$ and $(3,2)$ contains numbers $y_i$ and $z_i$, which correspond to the number of edges connecting different vertices of $G$ with two vertices of $T$.\\

\begin{figure}[h]
\center{
\scalefont{1}
$A_{3,2}=B_{3,2}=A^\intercal_{2,3}=B^\intercal_{2,3}=\left(
\begin{array}{ccccc}
0 & \cdots & 0 & y_1 & z_1 \\
\vdots& \ddots & \vdots & \vdots & \vdots \\
0 & \cdots & 0 & y_s & z_s \\
\end{array}
\right)$
}
\end{figure}

Blocks $(1,3)$ and $(3,1)$ are empty.\\

\begin{figure}[h]
\center{
\scalefont{1}
$A_{1,3}=B_{1,3}=A^\intercal_{3,1}=B^\intercal_{3,1}=\left(
\begin{array}{ccc}
0 & \cdots & 0 \\
\vdots& \ddots & \vdots \\
0 & \cdots & 0 \\
\end{array}
\right)$
}
\end{figure}

Here $w' = \sum \limits_{1 \leq i \leq r} w_i$, $x' = \sum \limits_{1 \leq i \leq r} x_i$, $y' = \sum \limits_{1 \leq i \leq s} y_i$, $z' = \sum \limits_{1 \leq i \leq s} z_i$, $p_i=x_i+w_i$, $q_i=y_i+z_i$.\\

The main difference between these matrices (matrices $A$ and $B$) is the difference between numbers of rows and columns that contain numbers $w_i, x_i, y_i, z_i$.\\

Let us denote by $A'$ the matrix obtained from $A$ by removing the $(n+r)$-th row and column:\\
\begin{figure}[h]
\center{
\scalefont{1}
$A'=\left(
\begin{array}{ccc}
A'_{1,1} & A'_{1,2} & A'_{1,3} \\
A'_{2,1} & A'_{2,2} & A'_{2,3} \\
A'_{3,1} & A'_{3,2} & A'_{3,3} \\
\end{array}
\right)$
}
\end{figure}

Here $A'_{1,1}=A_{1,1}$, $A'_{1,3}=A_{1,3}$, $A'_{3,1}=A_{3,1}$, $A'_{3,3}=A_{3,3}$.\\

Block $(1,2)$:

\begin{figure}[!h]
\center{
\scalefont{1}
$A'_{1,2} = A'^\intercal_{2,1} =\left(
\begin{array}{cccccccc}
0 & \cdots & 0 & w_1 & x_1 & 0 & \cdots & 0 \\
\vdots & \ddots & \vdots& \vdots & \vdots & \vdots & \ddots & \vdots \\
0 & \cdots & 0 & w_r & x_r & 0 & \cdots & 0 \\
\end{array}
\right)$
}
\end{figure}

The nonzero columns of $A'_{1,2}$ are numbered as $k$ and $k+1$.\\
\newpage

Block $(3,2)$:

\begin{figure}[h]
\center{
\scalefont{1}
$A'_(3,2)=A'^\intercal_{2,3}=\left(
\begin{array}{ccccc}
0 & \cdots & 0 & y_1 \\
\vdots& \ddots & \vdots & \vdots \\
0 & \cdots & 0 & y_s \\
\end{array}
\right)$
}
\end{figure}

Block $(2,2)$:

\begin{figure}[h]
\center{
\scalefont{1}
$A'_{2,2}=\left(
\begin{array}{ccccccccccc}
-2 & 1 & & & & & & & & & \\
1 & -2 & 1 & & & & & & & & \\
 & 1 & \ddots & \ddots & & & & & & & \\
 & & \ddots & -2 & 1 & & & & & & \\
 & & & 1 & -2-w' & 1 & & & & & \\
 & & & & 1 & -2-x' & 1 & & & & \\
 & & & & & 1 & -2 & 1 & & & \\
 & & & & & & 1 & -2 & 1 & & \\
 & & & & & & & 1 & \ddots & \ddots & \\
 & & & & & & & & \ddots & -2 & 1 \\
 & & & & & & & & & 1 & -2-y' \\
\end{array}
\right)$
}
\end{figure}

Let us denote by $B'$ the matrix obtained from $B$ by removing the $(r+1)$-th row and column:\\

\begin{figure}[h]
\center{
\scalefont{1}
$B'=\left(
\begin{array}{ccc}
B'_{1,1} & B'_{1,2} & B'_{1,3} \\
B'_{2,1} & B'_{2,2} & B'_{2,3} \\
B'_{3,1} & B'_{3,2} & B'_{3,3} \\
\end{array}
\right)$
}
\end{figure}

Here $B'_{1,1}=B_{1,1}$, $B'_{1,3}=B_{1,3}$, $B'_{3,1}=B_{3,1}$, $B'_{3,3}=B_{3,3}$.\\

Blocks $(1,2)$ and $(2,1)$ coincide with the respective blocks of the matrix $A'$:\\
$B'_{1,2}=A'_{1,2}$, $B'_{2,1}=B'^\intercal_{1,2}$.\\

Block $(3,2)$:

\begin{figure}[!h]
\center{
\scalefont{1}
$B'_{3,2}=B'^\intercal_{2,3}=\left(
\begin{array}{ccccc}
0 & \cdots & 0 & y_1 & z_1 \\
\vdots& \ddots & \vdots & \vdots & \vdots \\
0 & \cdots & 0 & y_s & z_s \\
\end{array}
\right)$
}
\end{figure}

\newpage

Block $(2,2)$:

\begin{figure}[!h]
\center{
\scalefont{1}
$B'_{2,2}=\left(
\begin{array}{ccccccccccc}
-2 & 1 & & & & & & & & & \\
1 & \ddots & \ddots & & & & & & & & \\
 & \ddots & -2 & 1 & & & & & & & \\
 & & 1 & -2 & 1 & & & & & & \\
 & & & 1 & -2-w' & 1 & & & & & \\
 & & & & 1 & -2-x' & 1 & & & & \\
 & & & & & 1 & -2 & 1 & & & \\
 & & & & & & 1 & \ddots & \ddots & & \\
 & & & & & & & \ddots & -2 & 1 & \\
 & & & & & & & & 1 & -2-y' & 1 \\
 & & & & & & & & & 1 & -2-z' \\
\end{array}
\right)$
}
\end{figure}

We need to show that $\overline{A'} = \overline{B'}$. To do this, we perform a series of actions with columns and rows of $B'$.\\

We add the last $s+1$ rows to the $(r+n-2)$-th row. Only the blocks $(2,2)$ and $(2,3)$ get changed:

\begin{figure}[h]
\center{
\scalefont{1}
$(2,2)=\left(
\begin{array}{ccccccccccc}
-2 & 1 & & & & & & & & & \\
1 & \ddots & \ddots & & & & & & & & \\
 & \ddots & -2 & 1 & & & & & & & \\
 & & 1 & -2 & 1 & & & & & & \\
 & & & 1 & -2-w' & 1 & & & & & \\
 & & & & 1 & -2-x' & 1 & & & & \\
 & & & & & 1 & -2 & 1 & & & \\
 & & & & & & 1 & \ddots & \ddots & & \\
 & & & & & & & \ddots & -2 & 1 & \\
 & & & & & & & & 1 & -1 & -1 \\
 & & & & & & & & & 1 & -2-z' \\
\end{array}
\right)$
}
\end{figure}

\begin{figure}[h]
\center{
\scalefont{1}
$(2,3)=\left(
\begin{array}{ccc}
0 & \cdots & 0\\
\vdots & \ddots & \vdots \\
0 & \cdots & 0\\
z_1 & \cdots & z_s \\
\end{array}
\right)$
}
\end{figure}

\newpage

We add the last $s+1$ columns to the $(r+n-2)$-th column. Only the blocks $(2,2)$ and $(3,2)$ get changed:

\begin{figure}[h]
\center{
\scalefont{1}
$(2,2)=\left(
\begin{array}{ccccccccccc}
 -2 & 1 & & & & & & & & & \\
 1 & \ddots & \ddots & & & & & & & & \\
 & \ddots & -2 & 1 & & & & & & & \\
 & & 1 & -2 & 1 & & & & & & \\
 & & & 1 & -2-w' & 1 & & & & & \\
 & & & & 1 & -2-x' & 1 & & & & \\
 & & & & & 1 & -2 & 1 & & & \\
 & & & & & & 1 & \ddots & \ddots & & \\
 & & & & & & & \ddots & -2 & 1 & \\
 & & & & & & & & 1 & -2 & -1 \\
 & & & & & & & & & -1 & -2-z' \\
\end{array}
\right)$
}
\end{figure}

\begin{figure}[h]
\center{
\scalefont{1}
$(3,2)=\left(
\begin{array}{cccc}
0 & \cdots & 0 & z_1 \\
\vdots & \ddots & \vdots & \vdots \\
0 & \cdots & 0 & z_s \\
\end{array}
\right)$
}
\end{figure}

We add the last $s$ rows to the $(r+n-1)$-th row. Only the blocks $(2,2)$ and $(2,3)$ get changed:

\begin{figure}[!h]
\center{
\scalefont{1}
$(2,2)=\left(
\begin{array}{ccccccccccc}
-2 & 1 & & & & & & & & & \\
1 & \ddots & \ddots & & & & & & & & \\
 & \ddots & -2 & 1 & & & & & & & \\
 & & 1 & -2 & 1 & & & & & & \\
 & & & 1 & -2-w' & 1 & & & & & \\
 & & & & 1 & -2-x' & 1 & & & & \\
 & & & & & 1 & -2 & 1 & & & \\
 & & & & & & 1 & \ddots & \ddots & & \\
 & & & & & & & \ddots & -2 & 1 & \\
 & & & & & & & & 1 & -2 & -1 \\
 & & & & & & & & & -1 & -2 \\
\end{array}
\right)$
}
\end{figure}

\begin{figure}[!h]
\center{
\scalefont{1}
$(2,3)=\left(
\begin{array}{ccc}
0 & \cdots & 0\\
\vdots & \ddots & \vdots\\
0 & \cdots & 0\\
 -y_1 & \cdots & -y_s \\
\end{array}
\right)$
}
\end{figure}

\newpage

We add the last $s$ columns to the $(r+n-1)$-th column. Only the blocks $(2,2)$ and $(3,2)$ get changed:

\begin{figure}[h]
\center{
\scalefont{1}
$(2,2)=\left(
\begin{array}{ccccccccccc}
-2 & 1 & & & & & & & & & \\
1 & \ddots & \ddots & & & & & & & & \\
 & \ddots & -2 & 1 & & & & & & & \\
 & & 1 & -2 & 1 & & & & & & \\
 & & & 1 & -2-w' & 1 & & & & & \\
 & & & & 1 & -2-x' & 1 & & & & \\
 & & & & & 1 & -2 & 1 & & & \\
 & & & & & & 1 & \ddots & \ddots & & \\
 & & & & & & & \ddots & -2 & 1 & \\
 & & & & & & & & 1 & -2 & -1 \\
 & & & & & & & & & -1 & -2-y' \\
\end{array}
\right)$
}
\end{figure}

\begin{figure}[h]
\center{
\scalefont{1}
$(3,2)=\left(
\begin{array}{cccc}
0 & \cdots & 0 & -y_1 \\
\vdots & \ddots & \vdots & \vdots \\
0 & \cdots & 0 & -y_s \\
\end{array}
\right)$
}
\end{figure}

Finally, we change signs of $(r+n-1)$-th row and column to obtain the matrix $A'$. Only the blocks $(2,2)$. $(2,3)$ and $(3,2)$ get changed:\\

\begin{figure}[!h]
\center{
\scalefont{1}
$(2,2)=\left(
\begin{array}{ccccccccccc}
-2 & 1 & & & & & & & & & \\
1 & \ddots & \ddots & & & & & & & & \\
 & \ddots & -2 & 1 & & & & & & & \\
 & & 1 & -2 & 1 & & & & & & \\
 & & & 1 & -2-w' & 1 & & & & & \\
 & & & & 1 & -2-x' & 1 & & & & \\
 & & & & & 1 & -2 & 1 & & & \\
 & & & & & & 1 & \ddots & \ddots & & \\
 & & & & & & & \ddots & -2 & 1 & \\
 & & & & & & & & 1 & -2 & 1 \\
 & & & & & & & & & 1 & -2-y' \\
\end{array}
\right)=A'_{2,2}$
}
\end{figure}

\begin{figure}[!h]
\center{
\scalefont{1}
$(2,3)=\left(
\begin{array}{ccc}
0 & \cdots & 0\\
\vdots & \ddots & \vdots\\
0 & \cdots & 0\\
y_1 & \cdots & y_s \\
\end{array}
\right)=A'_{2,3}$
}
\end{figure}

\begin{figure}[!h]
\center{
\scalefont{1}
$(3,2)=\left(
\begin{array}{cccc}
0 & \cdots & 0 & y_1 \\
\vdots & \ddots & \vdots & \vdots \\
0 & \cdots & 0 & y_s \\
\end{array}
\right)=A'_{3,2}$
}
\end{figure}

The blocks $(1,1)$, $(1,2)$, $(2,1)$, $(1,3)$, $(3,1)$, $(3,3)$ have not changed and coincide with the corresponding blocks of the matrix $A$.
So $\overline{A'} = \overline{B'}$ and the theorem is proved.

\end{proof}

There is also a weaker version of this theorem that is applicable to a smaller class of graphs. The proof of the second theorem avoids a cumbersome manipulations with matrices.\\

Let $F$ be an arbitrary planar graph and $f$ is one of its edges that is adjacent to the outer face of $F$. Let $G$ be an arbitrary planar graph and $g$ is one of its edges that is adjacent to the outer face of $G$. Let $T$ be a cyclic planar graph on $n$ vertices. The edges of $T$ are marked as $t_0, t_1, ..., t_ {n-1}$ in order they appear in the loop.\\

For $i \in \mathbb {Z}, 1 \leq i \leq n-1$ we construct a planar graph $J_i$ as follows:\\
1. We make a merge of edges $t_0$ and $g$ in such a way that all vertices of $G$ were on the outer face of $T$.
1. We make a merge of edges $t_i$ and $f$ in such a way that all vertices of $F$ were on the outer face of $T$.

We assume that the structure of $F$ and $G$ (as planar graphs) is preserved and the cycle of $T$ now has exactly $n-2$ edges that adjacent to the outer face of $J_i$.

\begin{thm}
The structure of the sandpile group of $J_i$ does not depends on the choice of $i$.
\end{thm}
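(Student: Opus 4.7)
The plan is to exploit planar duality in order to bypass the cumbersome matrix manipulations used in Theorem~1. The key external fact I will invoke is the classical statement that for any connected planar (multi)graph $P$ one has $S(P)\cong S(P^*)$, where $P^*$ denotes a planar dual of $P$; given this, it suffices to show that $J_i^*$ is the same multigraph for every admissible $i$, which immediately forces $S(J_i)$ to be independent of $i$.

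The first step is to enumerate the faces of $J_i$ together with their edge-incidences. Because $F$ and $G$ are drawn outside $T$ with their planar structures preserved, the interior face of the cycle $T$ remains a face of $J_i$, every inner face of $F$ or of $G$ remains a face of $J_i$, and the former outer faces of $F$, of $G$, and the exterior of $T$ coalesce into the single outer face of $J_i$. The hypothesis that exactly $n-2$ edges of $T$ lie on this outer face is precisely the guarantee that the merging is performed ``outward'' and that no spurious additional face is created. Reading off incidences: each of the $n-2$ free edges of $T$ separates the interior of $T$ from the outer face of $J_i$; the edge $t_0=g$ separates the interior of $T$ from the face of $G$ incident to $g$ on its inward side; the edge $t_i=f$ behaves identically, with $F$ replacing $G$; and every remaining edge lies inside $F$ or $G$ and inherits its face-pair from the planar embedding of $F$ or $G$ (with the former outer face of $F$ or $G$ replaced by the outer face of $J_i$). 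The crucial observation is that none of these face-pairs depends on $i$: only the label of the edge identified with $f$ changes, while the pair of faces it separates stays the same.

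Passing to the dual, the multigraph $J_i^*$ consists of a vertex $v_{\mathrm{in}}$ joined by $n-2$ parallel edges to an outer-face vertex $v_{\mathrm{out}}$, plus one edge each to the two face-vertices singled out by $f$ and $g$, together with the $i$-independent dual structures of $F$ and of $G$ glued at $v_{\mathrm{out}}$. Therefore $J_i^*\cong J_{i+1}^*$ as multigraphs for every admissible $i$, and duality yields $S(J_i)\cong S(J_i^*)\cong S(J_{i+1}^*)\cong S(J_{i+1})$. The main obstacle I anticipate is the face-enumeration step: one must check carefully, using the planarity hypotheses and the stated ``$n-2$ edges on the outer face'' condition, that no face is split or created as $i$ varies and that the face incident to $f$ on its inward side in $F$ is well-defined and unchanged across $i$. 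This is a short but essential combinatorial check, and it is precisely the place where the planarity assumption --- absent from Theorem~1 --- is being spent.
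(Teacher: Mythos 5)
Your proposal is correct and follows exactly the same route as the paper: invoke the theorem that a planar graph and its dual have isomorphic sandpile groups, observe that the dual of $J_i$ does not depend on $i$, and conclude $S(J_i)\cong S(J_i^*)\cong S(J_{i+1}^*)\cong S(J_{i+1})$. The paper simply asserts that the duals are ``obviously'' isomorphic, whereas you supply the face-enumeration details; the substance is identical.
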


\begin{proof}
It suffices to show that for every $2 \leq i \leq n-1$ the sandpile groups of $J_1$ and $J_i$ are isomorphic. There is a theorem which states that the sandpile groups of a planar graph and it's dual graph are isomorphic (\cite{Dual}).
It is obvious that the graphs $J'_1$ and $J'_i$ (that are duals, respectively, of $J_1$ and $J_i$) are isomorphic, which implies that $S(J_1) \cong S(J'_1) \cong S(J'_i) \cong S(J_i)$.
\end{proof}

\section{The graphs of classes $CH_n(a_1,...,a_n)$}

The theorem proved above can be used to calculate the sandpile groups for some series of graphs.
We recursively define a series of classes of graphs $CH_n(a_1, a_2, ..., a_n)$ (where $a_i \in \mathbb{Z}, a_i \geq 2, i \in [1..n]$). For convenience, we assume that every graph $G$ of each class is provided with an ordered subset of its own vertices: $L(G) = [v_1, ..., v_n]$.
$CH_1(a_1)$ contains only the cycle of $a_1$ vertices $v_1, ..., v_ {a_1}$, equipped with the natural order of the vertices in the cycle.\\
Every $G \in CH_{n +1}(a_1, a_2, ..., a_{n+1})$ is constructed from some graph $H \in CH_n(a_1, a_2, ..., a_n)$. Let $L(H)=[w_1, w_2, ..., w_k]$. We fix an arbitrary integer $i$, such that $1 \leq i <k$. Add to the graph $H$ a linear chain of $a_{n +1}-2$ vertices $v_1, ..., v_{a_{n+1}-2}$, connected by edges in accordance with the order of indices. Connect vertices $v_1$ and $w_i$ by one edge and connect vertices $v_{a_{n+1}-2}$ and $w_{i+1}$ by another edge. Now let $L(G)=[w_i, v_1, ..., v_{a_{n+1}-2}, w_{i+1}]$. (If $a_{n+1}=2$, we just need to add one more edge between vertices $w_i$ and $w_{i +1}$ and suppose that $L(G)=[w_i, w_{i+1}]$.) At this point the construction of $G$ is complete. For example, Fig. \ref{ris:constructExample} shows a graph of the class $CH_4(3,6,4,6)$ (here straight lines denote the edges of the subgraph of class $CH_3(3,6,4)$).\\

\begin{center}
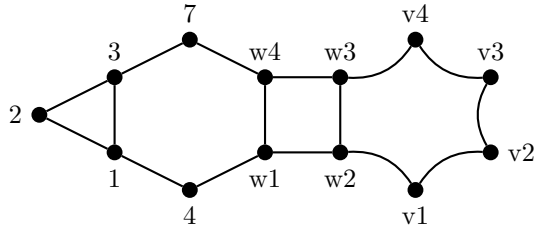

\center{
\begin{tikzpicture}[scale=1]
\GraphInit[vstyle=Classic]
\tikzset{VertexStyle/.style = {shape = circle,fill = black,minimum size = 6pt,inner sep=0pt}}
\Vertex[x=0,y=0,Lpos=-90]{1}
\Vertex[x=-1,y=0.5,Lpos=180]{2}
\Vertex[x=0,y=1,Lpos=90]{3}
\Edges(1,2,3,1)
\Vertex[x=1,y=-0.5,Lpos=-90]{4}
\Vertex[x=2,y=0,Lpos=-90]{w1}
\Vertex[x=2,y=1,Lpos=90]{w4}
\Vertex[x=1,y=1.5,Lpos=90]{7}
\Edges(1,4,w1,w4,7,3)
\Vertex[x=3,y=0,Lpos=-90]{w2}
\Vertex[x=3,y=1,Lpos=90]{w3}
\Edges(w1,w2,w3,w4)
\Vertex[x=4,y=-0.5,Lpos=-90]{v1}
\Vertex[x=5,y=0]{v2}
\Vertex[x=5,y=1,Lpos=90]{v3}
\Vertex[x=4,y=1.5,Lpos=90]{v4}
\Edges[style={bend left}](w2,v1,v2,v3,v4,w3)
\end{tikzpicture}
}
\captionof{figure}{$G \in CH_4(3,6,4,6)$}
\label{ris:constructExample}
\end{center}

Less formally, any graph $G \in CH_n(a_1, a_2, ..., a_n)$ consists of a ``chain'', obtained through a series of connection (by edges) of an ordered set of cyclic graphs with lengths $a_1, a_2, ..., a_n$. For example Fig. \ref{ris:exampleSimilar1} - \ref{ris:exampleSimilar3} show three different graphs belonging to the class $CH_4(3,6,4,6)$.\\
It is obvious (by main theorem) that for any $G, H \in CH_n(a_1, a_2, ..., a_n)$ we have $S(G) \cong S(H)$. For example, the sandpile group of each of three graphs shown in Fig. \ref{ris:exampleSimilar1} - \ref{ris:exampleSimilar3} has the structure of $C_{373}$.\\

\begin{center}
\begin{minipage}[h]{0.3\linewidth}
\center{
\begin{tikzpicture}[scale=0.6]
\GraphInit[vstyle=Classic]
\tikzset{VertexStyle/.style = {shape = circle,fill = black,minimum size = 6pt,inner sep=0pt}}
\Vertex[x=0,y=0,NoLabel]{1}
\Vertex[x=-1,y=0.5,NoLabel]{2}
\Vertex[x=0,y=1,NoLabel]{3}
\Edges(1,2,3,1)
\Vertex[x=1,y=-0.5,NoLabel]{4}
\Vertex[x=2,y=0,NoLabel]{5}
\Vertex[x=2,y=1,NoLabel]{6}
\Vertex[x=1,y=1.5,NoLabel]{7}
\Edges(1,4,5,6,7,3)
\Vertex[x=3,y=0,NoLabel]{8}
\Vertex[x=3,y=1,NoLabel]{9}
\Edges(5,8,9,6)
\Vertex[x=4,y=-0.5,NoLabel]{10}
\Vertex[x=5,y=0,NoLabel]{11}
\Vertex[x=5,y=1,NoLabel]{12}
\Vertex[x=4,y=1.5,NoLabel]{13}
\Edges(8,10,11,12,13,9)
\end{tikzpicture}
}
\captionof{figure}{$G \in CH_4(3,6,4,6)$}
\label{ris:exampleSimilar1}
\end{minipage}
\hfill
\begin{minipage}[h]{0.3\linewidth}
\center{
\begin{tikzpicture}[scale=0.6]
\GraphInit[vstyle=Classic]
\tikzset{VertexStyle/.style = {shape = circle,fill = black,minimum size = 6pt,inner sep=0pt}}
\Vertex[x=0,y=0,NoLabel]{1}
\Vertex[x=-1,y=0.5,NoLabel]{2}
\Vertex[x=0,y=1,NoLabel]{3}
\Edges(1,2,3,1)
\Vertex[x=1,y=-0.5,NoLabel]{4}
\Vertex[x=2,y=0,NoLabel]{5}
\Vertex[x=2,y=1,NoLabel]{6}
\Vertex[x=1,y=1.5,NoLabel]{7}
\Edges(1,4,5,6,7,3)
\Vertex[x=2,y=-1,NoLabel]{8}
\Vertex[x=3,y=-0.5,NoLabel]{9}
\Edges(4,8,9,5)
\Vertex[x=2,y=-2,NoLabel]{10}
\Vertex[x=3,y=-2.5,NoLabel]{11}
\Vertex[x=4,y=-2,NoLabel]{12}
\Vertex[x=4,y=-1,NoLabel]{13}
\Edges(8,10,11,12,13,9)
\end{tikzpicture}
}
\captionof{figure}{$G \in CH_4(3,6,4,6)$}
\label{ris:exampleSimilar2}
\end{minipage}
\hfill
\begin{minipage}[h]{0.3\linewidth}
\center{
\begin{tikzpicture}[scale=0.6]
\GraphInit[vstyle=Classic]
\tikzset{VertexStyle/.style = {shape = circle,fill = black,minimum size = 6pt,inner sep=0pt}}
\Vertex[x=0,y=0,NoLabel]{1}
\Vertex[x=-1,y=0.5,NoLabel]{2}
\Vertex[x=0,y=1,NoLabel]{3}
\Edges(1,2,3,1)
\Vertex[x=1,y=-0.5,NoLabel]{4}
\Vertex[x=2,y=0,NoLabel]{5}
\Vertex[x=2,y=1,NoLabel]{6}
\Vertex[x=1,y=1.5,NoLabel]{7}
\Edges(1,4,5,6,7,3)
\Vertex[x=2,y=-1,NoLabel]{8}
\Vertex[x=3,y=-0.5,NoLabel]{9}
\Edges(4,8,9,5)
\Vertex[x=0,y=-1,NoLabel]{10}
\Vertex[x=0,y=-2,NoLabel]{11}
\Vertex[x=1,y=-2.5,NoLabel]{12}
\Vertex[x=2,y=-2,NoLabel]{13}
\Edges(4,10,11,12,13,8)
\end{tikzpicture}
}
\captionof{figure}{$G \in CH_4(3,6,4,6)$}
\label{ris:exampleSimilar3}
\end{minipage}
\end{center}

For each class $CH_n(a_1, a_2, ..., a_n)$ we choose a canonical representative of this class --- a graph that is arranged in such a way that all of its $n$ main cycles have a common vertex. We will denote this graph by $Ch_n(a_1, a_2, ..., a_n)$. For example, the canonical representative of the class $CH_4(3,6,4,6)$ is shown in Fig. \ref{ris:exampleNum}.\\

\begin{center}
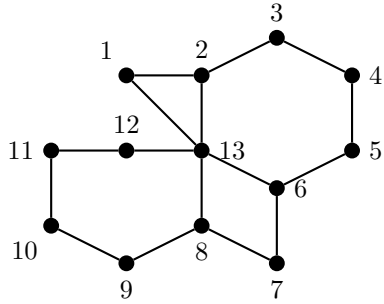

\center{
\begin{tikzpicture}[scale=1]
\GraphInit[vstyle=Classic]
\tikzset{VertexStyle/.style = {shape = circle,fill = black,minimum size = 6pt,inner sep=0pt}}
\Vertex[x=0,y=0,Lpos=120]{1}
\Vertex[x=1,y=0,Lpos=90]{2}
\Vertex[x=2,y=0.5,Lpos=90]{3}
\Vertex[x=3,y=0]{4}
\Vertex[x=3,y=-1]{5}
\Vertex[x=2,y=-1.5]{6}
\Vertex[x=2,y=-2.5,Lpos=-90]{7}
\Vertex[x=1,y=-2,Lpos=-90]{8}
\Vertex[x=0,y=-2.5,Lpos=-90]{9}
\Vertex[x=-1,y=-2,Lpos=-120]{10}
\Vertex[x=-1,y=-1,Lpos=180]{11}
\Vertex[x=0,y=-1,Lpos=90]{12}
\Edges(1,2,3,4,5,6,7,8,9,10,11,12)
\Vertex[x=1,y=-1]{13}
\Edge(1)(13) \Edge(2)(13) \Edge(6)(13) \Edge(8)(13) \Edge(12)(13)
\end{tikzpicture}
}
\captionof{figure}{$Ch_4(3,6,4,6)$}
\label{ris:exampleNum}
\end{center}

Now our task is to calculate the sandpile group of each of these canonical representatives.\\
What is the Laplacian matrix of this graph? Let $Ch_n(a_1, a_2, ..., a_n)$ consist of $k+1$ vertices. Let us enumerate them as follows. The only common vertex of all cycles is assigned the number $k+1$. All other vertices are given the numbers from $1$ to $k$ to match the order they appear in the ``outer'' cycle of the whole graph. An example of numbering of vertices of $Ch_4(3,6,4,6)$ is shown in Fig. \ref{ris:exampleNum}.\\

Obviously, in such a numbering the Laplacian matrix of the graph $Ch_n(a_1, a_2, ..., a_n)$ looks as follows:\\

\begin{figure}[h]
\center{
\scalefont{1}
$\left(
\begin{array}{c|c|c|c|c|c}
h_1 & 1 & & & & z_1\\
 \hline
1 & h_2 & 1 & & & z_2\\
 \hline
 & 1 & \ddots & \ddots & & \vdots \\
  \hline
 & & \ddots & h_{k-1} & 1 & z_{k-1}\\
  \hline
 & & & 1 & h_k & z_k\\
  \hline
z_1 & z_2 & \cdots & z_{k-1} & z_k & h_{k+1} \\
\end{array}
\right)$
}
\end{figure}

We remove the last column and the last row of the matrix and denote the resulting matrix by $M$:\\

\begin{figure}[h]
\center{
\scalefont{1}
$\left(
\begin{array}{c|c|c|c|c}
h_1 & 1 & & & \\
 \hline
1 & h_2 & 1 & & \\
 \hline
 & 1 & \ddots & \ddots & \\
  \hline
 & & \ddots & h_{k-1} & 1 \\
  \hline
 & & & 1 & h_k \\
\end{array}
\right)$
}
\end{figure}

To calculate the sandpile group of a graph $Ch_n(a_1, a_2, ..., a_n)$ it is sufficient to compute the Smith normal form of the matrix $M$.\\

We transform $M$ as follows:\\
1. From the second row we substract the first row $h_2$ times. The second row takes the form $(r_2, 0,1,0, ..., 0)$, where, $r_2 = 1-r_1 h_2$, $r_1 = h_1$.\\
2. For $i$ from $3$ to $k$ we repeat the following procedure:\\
From the the $i$-th row we substract the $(i_1)$-th row $h_i$ times, then once from the $i$-th row we subtract the $(i-2)$-th row.\\
As a result, the $i$-th row becomes $(r_i, 0, ..., 0,1,0, ..., 0)$, where the number $1$ is in the $(i+1)$-th position. $r_i=-r_{i-1} h_i-r_{i-2}$.\\
3. Finally we replace the last line to the first position.\\

As a result, the matrix takes the form:\\

\begin{figure}[h]
\center{
\scalefont{1}
$\left(
\begin{array}{c|c|c|c|c}
r_k & & & & \\
 \hline
r_1 & 1 & & & \\
 \hline
\vdots & & \ddots & & \\
  \hline
r_{k-2} & & & 1 & \\
  \hline
r_{k-1} & & & & 1 \\
\end{array}
\right)$
}
\end{figure}

It is clear that the ``unwanted'' elements of the first column can be removed by manipulation with other columns.
Therefore, thr Smith normal form of the matrix $M$ looks like:\\

\begin{figure}[h]
\center{
\scalefont{1}
$\left(
\begin{array}{c|c|c|c|c}
1 & & & & \\
 \hline
 & 1 & & & \\
 \hline
 & & \ddots & & \\
  \hline
 & & & 1 & \\
  \hline
 & & & & r_k \\
\end{array}
\right)$
}
\end{figure}

So the group of the graph $Ch_n(a_1, a_2, ..., a_n)$ is cyclic and we need only to determine its cardinality.\\
We use the well-known statement about the cardinality of the sandpile group of a graph. Namely, the cardinality of the sandpile group of a graph equals the number of spanning trees of this graph (\cite{Def}).\\

Here we count the number of spanning trees of a graph of class $CH_{n +1}(a_1, a_2, ..., a_n, a_{n+1})$.\\
We define a series of functions $F_i(x_1, x_2, ..., x_i), i \in \mathbb{N}$:\\
$$F_n(a_1,...,a_n)=|Ch_n(a_1,...,a_n)|$$\\

Next, recall that our graph is obtained from the graph of class $CH_n(a_1, a_2, ..., a_n)$ by adding of $a_{n+1}-2$ vertices and $a_{n +1}-1$ edges. These additional edges together with edge $u$ (which previously had belonged to the graph of class $CH_n(a_1, a_2, ..., a_n)$) constitute a cycle of length $a_{n+1}.$ (For example, Fig.\ref{ris:uCh} shows a graph of class $CH_4(3,6,4,6)$ with its edge $u$.)\\
To obtain a spanning tree, we need to remove some edges belonging to this cycle. Here we have two options:\\

1. Remove exactly one of $a_{n+1}-1$ edges (which differ from $u$) in the cycle of length $a_n$, (edge that did not belong to the graph of class $CH_n(a_1, a_2, ..., a_n)$). (Example of the result of such an operation on the graph of class $CH_4 (3,6,4,6)$ is shown in Fig.\ref{ris:uChMod1}.) It is clear that we can not remove more edges from this set, if we are going to get a spanning tree. We get a graph with number of spanning trees equal to the number of spanning trees of $Ch_n(a_1, a_2, ..., a_n)$, which means that here we have $(a_{n +1}-1) \cdot F_n(a_1 , a_2, ..., a_n)$ opportunities to construct a spanning tree.\\

\begin{center}
\begin{minipage}[h]{0.4\linewidth}
\center{
\begin{tikzpicture}[scale=1]
\GraphInit[vstyle=Classic]
\tikzset{VertexStyle/.style = {shape = circle,fill = black,minimum size = 6pt,inner sep=0pt}}
\Vertex[x=0,y=0,NoLabel]{1}
\Vertex[x=-1,y=0.5,NoLabel]{2}
\Vertex[x=0,y=1,NoLabel]{3}
\Edges(1,2,3,1)
\Vertex[x=1,y=-0.5,NoLabel]{4}
\Vertex[x=2,y=0,NoLabel]{w1}
\Vertex[x=2,y=1,NoLabel]{w4}
\Vertex[x=1,y=1.5,NoLabel]{7}
\Edges(1,4,w1,w4,7,3)
\Vertex[x=3,y=0,NoLabel]{w2}
\Vertex[x=3,y=1,NoLabel]{w3}
\Edges(w1,w2)
\Edges(w3,w4)
\Vertex[x=4,y=-0.5,NoLabel]{v1}
\Vertex[x=5,y=0,NoLabel]{v2}
\Vertex[x=5,y=1,NoLabel]{v3}
\Vertex[x=4,y=1.5,NoLabel]{v4}
\Edges(w2,v1,v2,v3,v4,w3)
\Edge[label=u](w2)(w3)
\end{tikzpicture}
}
\captionof{figure}{The graph of class $CH_4(3,6,4,6)$}
\label{ris:uCh}
\end{minipage}
\hfill
\begin{minipage}[h]{0.4\linewidth}
\center{
\begin{tikzpicture}[scale=1]
\GraphInit[vstyle=Classic]
\tikzset{VertexStyle/.style = {shape = circle,fill = black,minimum size = 6pt,inner sep=0pt}}
\Vertex[x=0,y=0,NoLabel]{1}
\Vertex[x=-1,y=0.5,NoLabel]{2}
\Vertex[x=0,y=1,NoLabel]{3}
\Edges(1,2,3,1)
\Vertex[x=1,y=-0.5,NoLabel]{4}
\Vertex[x=2,y=0,NoLabel]{w1}
\Vertex[x=2,y=1,NoLabel]{w4}
\Vertex[x=1,y=1.5,NoLabel]{7}
\Edges(1,4,w1,w4,7,3)
\Vertex[x=3,y=0,NoLabel]{w2}
\Vertex[x=3,y=1,NoLabel]{w3}
\Edges(w1,w2)
\Edges(w3,w4)
\Vertex[x=4,y=-0.5,NoLabel]{v1}
\Vertex[x=5,y=0,NoLabel]{v2}
\Vertex[x=5,y=1,NoLabel]{v3}
\Vertex[x=4,y=1.5,NoLabel]{v4}
\Edges(w2,v1,v2,v3)
\Edges(v4,w3)
\Edge[label=u](w2)(w3)
\end{tikzpicture}
}
\captionof{figure}{$CH_4(3,6,4,6) \rightarrow CH_3(3,6,4)$}
\label{ris:uChMod1}
\end{minipage}
\end{center}

2. Do not delete any of $a_{n+1}-1$ edges (which differ from $u$) of cycle of length $n$, (edges that not belonged to the graph of class $CH_n(a_1, a_2,..., a_n)$). In this case, the only way to get rid of the cycle is to remove the edge $u$. After removing it, we get a graph of class $CH_n(a_1, a_2, ..., a_{n-1}, a_n+a_{n+1}-2)$. (Fig.\ref{ris:uChMod2} shows a graph obtained from the graph of class $CH_4(3,6,4,6)$ by removing the edge $u$. The edges, which we agreed not to remove, are marked by curved lines. Fig.\ref{ris:uChMod3} shows the graph, obtained from the graph of class $CH_3 (3,6,8)$ by ``contraction'' of fixed edges.) But since we have agreed not to touch the $a_{n+1}-1$ edges, then it is clear that the number of spanning trees that we can get is the same as the number of spanning trees of the graph $Ch_n(a_1, a_2, ..., a_{n-1}, a_n-1)$, which means that here we have $F_n(a_1, a_2, ..., a_n-1)$ opportunities to construct a spanning tree. Generally speaking, there is an inaccuracy. There is no description of how to act in the case $a_n = 2$. This inaccuracy will be removed later.\\

\begin{center}
\begin{minipage}[h]{0.4\linewidth}
\center{
\begin{tikzpicture}[scale=1]
\GraphInit[vstyle=Classic]
\tikzset{VertexStyle/.style = {shape = circle,fill = black,minimum size = 6pt,inner sep=0pt}}
\Vertex[x=0,y=0,NoLabel]{1}
\Vertex[x=-1,y=0.5,NoLabel]{2}
\Vertex[x=0,y=1,NoLabel]{3}
\Edges(1,2,3,1)
\Vertex[x=1,y=-0.5,NoLabel]{4}
\Vertex[x=2,y=0,NoLabel]{w1}
\Vertex[x=2,y=1,NoLabel]{w4}
\Vertex[x=1,y=1.5,NoLabel]{7}
\Edges(1,4,w1,w4,7,3)
\Vertex[x=3,y=-0.5,NoLabel]{w2}
\Vertex[x=3,y=1.5,NoLabel]{w3}
\Edges(w1,w2)
\Edges(w3,w4)
\Vertex[x=4,y=-0.5,NoLabel]{v1}
\Vertex[x=5,y=0,NoLabel]{v2}
\Vertex[x=5,y=1,NoLabel]{v3}
\Vertex[x=4,y=1.5,NoLabel]{v4}
\Edges[style={bend left}](w2,v1,v2,v3,v4,w3)
\end{tikzpicture}
}
\captionof{figure}{A graph of class $CH_3(3,6,8)$}
\label{ris:uChMod2}
\end{minipage}
\hfill
\begin{minipage}[h]{0.4\linewidth}
\center{
\begin{tikzpicture}[scale=1]
\GraphInit[vstyle=Classic]
\tikzset{VertexStyle/.style = {shape = circle,fill = black,minimum size = 6pt,inner sep=0pt}}
\Vertex[x=0,y=0,NoLabel]{1}
\Vertex[x=-1,y=0.5,NoLabel]{2}
\Vertex[x=0,y=1,NoLabel]{3}
\Edges(1,2,3,1)
\Vertex[x=1,y=-0.5,NoLabel]{4}
\Vertex[x=2,y=0,NoLabel]{w1}
\Vertex[x=2,y=1,NoLabel]{w4}
\Vertex[x=1,y=1.5,NoLabel]{7}
\Edges(1,4,w1,w4,7,3)
\Vertex[x=3,y=0.5,NoLabel]{x1}
\Edges(w1,x1,w4)
\end{tikzpicture}
}
\captionof{figure}{A graph of class $CH_3(3,6,3)$}
\label{ris:uChMod3}
\end{minipage}
\end{center}

It is also clear that the cyclic graph $Ch_1(a_1)$ has exactly $a_1$ spanning trees. So now the functions $F_i$ are defined as follows:\\

\begin{equation} \label{eqF}
F_1(x_1)=x_1, F_{i+1}(x_1,x_2,...,x_i,x_{i+1})=F_i(x_1,x_2,...,x_i-1)+(x_{i+1}-1) \cdot F_i(x_1,x_2,...,x_i)
\end{equation}

Generally speaking, such a definition of the functions $F_n$ is ``redundant'' in the sense that their values are determined by including the cases where some of the arguments are equal to $1$ (despite the fact that in the description of $Ch_n(a_1,. .., a_n)$ all of the arguments are larger a $1$). This feature is used to eliminate the inaccuracies that arose in counting the spanning trees of a graph. Suppose we have a graph of the class $CH_n(a_1, a_2, ..., a_k, 2, 2, ..., 2, 2, m) $, where $a_k> 2$. We can calculate the corresponding value of $F_n$, using the new recursive definition:\\
$F_n(a_1,a_2,...,a_k,2,2,...,2,2,m)=(m-1) \cdot F_{n-1}(a_1,a_2,...,a_k,2,2,...,2,2)+F_{n-1}(a_1,a_2,...,a_k,2,2,...,2,1)$ =
$(m-1) \cdot F_{n-1}(a_1,a_2,...,a_k,2,2,...,2,2)+(1-1) \cdot F_{n-2}(a_1,a_2,...,a_k,2,2,...,2)+F_{n-2}(a_1,a_2,...,a_k,2,2,...,1)$=...=
$(m-1) \cdot F_{n-1}(a_1,a_2,...,a_k,2,2,...,2,2)+F_{k}(a_1,a_2,...,a_k-1)$.
This result is consistent with the process of counting the spanning trees:\\
1. The number $(m-1) \cdot F_{n-1}(a_1, a_2, ..., a_k, 2,2, ..., 2,2)$ corresponds to the case when we remove one of $m-1$ edges (which differ from $u$) in a cycle of length $m$.\\
2. When we intend to save the $m-1$ edges, we are obliged to remove not only the $u$, but all other edges connecting vertices that are connected by the edge $u$. When we do that, we get a graph of class $CH_{k}(a_1, a_2, ..., a_k+m-2)$. Since we have agreed not to touch $m-1$ of its edges, then the number of spanning trees that we can get, equals the number of spanning trees of a graph $Ch_{k}(a_1, a_2, ..., a_k-1)$. This number corresponds precisely to $F_ {k}(a_1, a_2, ..., a_k-1)$.\\

Hence the functions $F_n$ are well-defined.\\

Now we find the non-recursive representation of functions $F_n$. To do this, we need some auxiliary objects.\\

For non-negative integers $i, j$ define the set $C_{i, j}$.\\
$I \in C_{i, j}$ if and only if all of the following conditions are true:\\
1. $I \subset \mathbb{N}$.\\
2. $|I|=i$.\\
3. $x \in I \Rightarrow 1 \leq x \leq j$.\\
4. $(a,b \in I, a < b \ | \ (\forall t \in (a,b) \Rightarrow t \not\in I)) \Rightarrow (b-a \equiv 1 \ (mod 2))$.\\
5. $j-max(I) \equiv 0 (mod 2)$.\\

We also define the sets $A_ {i, j}$ and $B_ {i, j}$ as follows:\\
1. $C_{i,j} = A_{i,j} \bigsqcup B_{i,j}$.\\
2. $I \in B_{i,j} \Leftrightarrow j \in I$.\\

We define the functions $\alpha_{i, j}, \beta_{i, j}, \gamma_{i, j}$ and $\beta_{i, j}'$:\\
$\alpha_{i,j}(x_1,...,x_j) = \sum \limits_{I \in A_{i,j}} 1*\prod \limits_{i \in I} x_i$.\\
$\beta_{i,j}(x_1,...,x_j) = \sum \limits_{I \in B_{i,j}} 1*\prod \limits_{i \in I} x_i$.\\
$\gamma_{i,j}(x_1,...,x_j) = \sum \limits_{I \in C_{i,j}} 1*\prod \limits_{i \in I} x_i$.\\
$\beta_{i,j}'(x_1,...,x_{j-1}) = \beta_{i,j}(x_1,...,x_{j-1},1)$.\\

Thus, for example, with $i = 3$, $j = 7$ these functions are:\\
$\alpha_{3,7}(x_1,...,x_7)=x_1 x_2 x_3 + x_1 x_2 x_5 + x_1 x_4 x_5 + x_3 x_4 x_5$\\
$\beta_{3,7}(x_1,...,x_7)=x_1 x_2 x_7 + x_1 x_4 x_7 + x_1 x_6 x_7 + x_3 x_4 x_7 + x_3 x_6 x_7 + x_5 x_6 x_7$\\
$\gamma_{3,7}(x_1,...,x_7)=x_1 x_2 x_3 + x_1 x_2 x_5 + x_1 x_2 x_7 + x_1 x_4 x_5 + x_1 x_4 x_7 + x_1 x_6 x_7 + x_3 x_4 x_5 + x_3 x_4 x_7 + x_3 x_6 x_7 + x_5 x_6 x_7$\\
$\beta_{3,7}'(x_1,...,x_6) = x_1 x_2 + x_1 x_4 + x_1 x_6 + x_3 x_4 + x_3 x_6 + x_5 x_6$\\

It is clear that $\gamma_{i, j} = \alpha_{i, j} + \beta_{i, j} $.\\
It is also easy to check that for any positive $i, j$ the following equation is true:\\
\begin{equation} \label{gammaEq}
\gamma_{i,j}=x_j \cdot \gamma_{i-1,j-1}+\beta_{i+1,j-1}'
\end{equation}

For $n \in \mathbb{N} $ we define a function $G_n$:\\

\begin{equation} \label{eqG}
G_n(x_1, ..., x_n) = \gamma_{n, n} - \gamma_{n-2, n} + \gamma_{n-4, n} - \gamma_{n-6, n} + ... \; ,
\end{equation}

where the sum extends while the first subscript of functions $\gamma$ preserves the non-negativity.\\
To be more specific:\\
1. $G_{4k}=\gamma_{4k,4k}-\gamma_{4k-2,4k}+...+\gamma_{0,4k}$\\
2. $G_{4k+1}=\gamma_{4k+1,4k+1}-\gamma_{4k-1,4k+1}+...+\gamma_{1,4k+1}$\\
3. $G_{4k+2}=\gamma_{4k+2,4k+2}-\gamma_{4k,4k+2}+...-\gamma_{0,4k+2}$\\
4. $G_{4k+3}=\gamma_{4k+3,4k+3}-\gamma_{4k+1,4k+3}+...-\gamma_{1,4k+3}$\\

\begin{thm}
$G_n(x_1,...,x_n) = F_n(x_1,...,x_n)$
\end{thm}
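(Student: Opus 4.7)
The plan is to prove the identity by induction on $n$. The base case $n=1$ is immediate: $C_{1,1} = \{\{1\}\}$ (since $1-1 = 0$ satisfies condition 5), so $G_1(x_1) = \gamma_{1,1}(x_1) = x_1 = F_1(x_1)$.

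For the inductive step I would show that $G_{n+1}$ satisfies the same recurrence (\ref{eqF}) as $F_{n+1}$, namely
$$G_{n+1}(x_1,\ldots,x_{n+1}) = G_n(x_1,\ldots,x_n-1) + (x_{n+1}-1)\,G_n(x_1,\ldots,x_n).$$
Applying identity (\ref{gammaEq}) to each $\gamma_{n+1-2k,\,n+1}$ occurring in the definition of $G_{n+1}$ splits that sum as
$$G_{n+1} \;=\; x_{n+1}\sum_{k} (-1)^k \gamma_{n-2k,\,n} \;+\; \sum_{k} (-1)^k \beta'_{n+2-2k,\,n}.$$
The first piece is, by definition of $G_n$, exactly $x_{n+1}\,G_n(x_1,\ldots,x_n)$.

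For the second piece I would use the elementary identity
$$\gamma_{i,j}(x_1,\ldots,x_{j-1},x_j-1) - \gamma_{i,j}(x_1,\ldots,x_j) \;=\; -\beta'_{i,j}(x_1,\ldots,x_{j-1}),$$
which follows by decomposing $C_{i,j} = A_{i,j} \sqcup B_{i,j}$ and noting that only $\beta_{i,j}$ depends (linearly) on $x_j$. Summing this identity with alternating signs yields
$$G_n(x_1,\ldots,x_n-1) - G_n(x_1,\ldots,x_n) \;=\; -\sum_{k\geq 0} (-1)^k \beta'_{n-2k,\,n}.$$
Reindexing $j = k-1$ in $\sum_k (-1)^k \beta'_{n+2-2k,\,n}$, together with the observation that $\beta'_{n+2,n} = 0$ (no $I \subset \{1,\ldots,n\}$ can have $|I| = n+2$), converts this second sum into exactly the right-hand side above. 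Combining the two pieces gives the desired recurrence, closing the induction.

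The main obstacle is the careful bookkeeping at the boundary of the alternating sums defining $G_n$. The smallest first index of $\gamma$ is $0$ when $n$ is even and $1$ when $n$ is odd, so when one applies (\ref{gammaEq}) to the last term of $G_{n+1}$ in the $n+1$ even case it would formally produce $\gamma_{-1,n}$; this must be reconciled with the isolated boundary contribution $\gamma_{0,n+1} = 1$, and matched against $\beta'_{1,n} = 1$ on the telescoping side. Verifying that all four parity cases of $n \bmod 4$ fit the same recurrence amounts to checking the endpoint of the reindexing, but each case reduces to the single identity above.
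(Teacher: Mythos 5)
Your proposal is correct and follows essentially the same route as the paper: induction on $n$ via the recurrence (\ref{eqF}), combining identity (\ref{gammaEq}) with the observation that $\gamma_{i,j}(x_1,\ldots,x_j-1)-\gamma_{i,j}(x_1,\ldots,x_j)=-\beta'_{i,j}$ (linearity of $\beta_{i,j}$ in $x_j$). The only differences are presentational: you expand $G_{n+1}$ downward while the paper builds the right-hand side upward, and you treat the boundary/parity bookkeeping uniformly where the paper writes out the case $i=4k$ and asserts the others are analogous.
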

\begin{proof}
$G_1(x_1)=\gamma_{1,1}=x_1=F_1(x_1)$.\\
Suppose that $G_i = F_i$. We must show that  $G_{i+1} = F_{i+1}$.
To do this, according to (\ref{eqF}), it suffices to show that $G_{i+1}(x_1,x_2,...,x_i,x_{i+1})=F_i(x_1,x_2,...,x_i-1)+(x_{i+1}-1) \cdot F_i(x_1,x_2,...,x_i)=$[by the induction hypothesis]$=G_i(x_1,x_2,...,x_i-1)+(x_{i+1}-1) \cdot G_i(x_1,x_2,...,x_i)$.\\
Indeed, consider, for example, the case $i = 4k$ (for the remaining cases the chain of equalities is built in the same way):\\
$G_i(x_1,x_2,...,x_i-1)+(x_{i+1}-1) \cdot G_i(x_1,x_2,...,x_i)=G_{4k}(x_1,x_2,...,x_{4k}-1)+(x_{4k+1}-1) \cdot G_{4k}(x_1,x_2,...,x_{4k})=(\gamma_{4k,4k}-\gamma_{4k-2,4k}+...+\gamma_{0,4k})(x_1,x_2,...,x_{4k}-1)+(x_{4k+1} \cdot \gamma_{4k,4k}-x_{4k+1} \cdot \gamma_{4k-2,4k}+...-x_{4k+1} \cdot \gamma_{2,4k}+x_{4k+1} \cdot \gamma_{0,4k})-(\gamma_{4k,4k}-\gamma_{4k-2,4k}+...-\gamma_{2,4k}+\gamma_{0,4k})=(\alpha_{4k,4k}-
\alpha_{4k-2,4k}+...+\alpha_{0,4k})(x_1,x_2,...,x_{4k}-1)+
(\beta_{4k,4k}-\beta_{4k-2,4k}+...+\beta_{0,4k})(x_1,x_2,...,x_{4k}-1)+
(x_{4k+1} \cdot \gamma_{4k,4k}-x_{4k+1} \cdot \gamma_{4k-2,4k}+...-x_{4k+1} \cdot \gamma_{2,4k}+x_{4k+1} \cdot \gamma_{0,4k})-(\gamma_{4k,4k}-\gamma_{4k-2,4k}+...-\gamma_{2,4k}+\gamma_{0,4k})=
(\alpha_{4k,4k}-\alpha_{4k-2,4k}+...+\alpha_{0,4k})+
(\beta_{4k,4k}-\beta_{4k-2,4k}+...+\beta_{0,4k})-
(\beta_{4k,4k}'-\beta_{4k-2,4k}'+...+\beta_{0,4k}')+
(x_{4k+1} \cdot \gamma_{4k,4k}-x_{4k+1} \cdot \gamma_{4k-2,4k}+...-x_{4k+1} \cdot \gamma_{2,4k}+x_{4k+1} \cdot \gamma_{0,4k})-(\gamma_{4k,4k}-\gamma_{4k-2,4k}+...-\gamma_{2,4k}+\gamma_{0,4k})=
(\gamma_{4k,4k}-\gamma_{4k-2,4k}+...-\gamma_{2,4k}+\gamma_{0,4k})+
(\beta_{4k,4k}'-\beta_{4k-2,4k}'+...+\beta_{0,4k}')+(x_{4k+1} \cdot \gamma_{4k,4k}-x_{4k+1} \cdot \gamma_{4k-2,4k}+...-x_{4k+1} \cdot \gamma_{2,4k}+x_{4k+1} \cdot \gamma_{0,4k})-(\gamma_{4k,4k}-\gamma_{4k-2,4k}+...-\gamma_{2,4k}+\gamma_{0,4k})=
(\beta_{4k,4k}'-\beta_{4k-2,4k}'+...+\beta_{0,4k}')+
(x_{4k+1} \cdot \gamma_{4k,4k}-x_{4k+1} \cdot \gamma_{4k-2,4k}+...-x_{4k+1} \cdot \gamma_{2,4k}+x_{4k+1} \cdot \gamma_{0,4k})=x_{4k+1} \cdot \gamma_{4k,4k}-(\beta_{4k,4k}'+x_{4k+1} \cdot \gamma_{4k-2,4k})+...+(\beta_{2,4k}'+x_{4k+1} \cdot \gamma_{0,4k})-\beta_{0,4k}'=
[by (\ref{gammaEq})]=
x_1 x_2 ... x_{4k} x_{4k+1}-
\gamma_{4k-1,4k+1}+...
+\gamma_{1,4k+1}-
0=\gamma_{4k+1,4k+1}-\gamma_{4k+1,4k+1}+...+\gamma_{1,4k+1}=G_{4k+1}=G_{i+1}$
\end{proof}

Thus, we can formulate the last theorem.

\begin{thm}
$S(Ch_n(a_1,...,a_n)) \cong C_{F_n(a_1,...,a_n)} \cong C_{G_n(a_1,...,a_n)})$, where $F_n$, $G_n$ are defined by (\ref{eqF}), (\ref{eqG}).
\end{thm}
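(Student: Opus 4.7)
The plan is to assemble three ingredients already established in the paper: the explicit Smith normal form computation performed just above, the matrix-tree / cardinality identification, and the preceding theorem $G_n=F_n$.

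First, I would invoke the row/column reduction already carried out on $M$. That reduction produced the Smith form $\mathrm{diag}(1,\ldots,1,r_k)$, where $r_k$ is defined by the recurrence $r_1=h_1$, $r_2=1-r_1h_2$, $r_i=-r_{i-1}h_i-r_{i-2}$. By the definition of the sandpile group in terms of the Smith form (Definition 1 applied to the deleted row/column version $M$), this already yields that $S(Ch_n(a_1,\ldots,a_n))$ is cyclic, of order $|r_k|$. Hence the first half of the claim, namely the cyclicity of the group, comes for free; only the order has to be identified.

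Next, I would identify $|r_k|$ with $F_n(a_1,\ldots,a_n)$. Using the classical fact that the order of the sandpile group equals the number of spanning trees, and invoking the Main Theorem (Theorem 1) to reduce from an arbitrary representative of the class $CH_n(a_1,\ldots,a_n)$ to the canonical $Ch_n(a_1,\ldots,a_n)$, the deletion analysis on the last adjoined cycle written out above shows that the spanning tree count satisfies precisely the recursion $F_{i+1}(x_1,\ldots,x_i,x_{i+1})=F_i(x_1,\ldots,x_i-1)+(x_{i+1}-1)F_i(x_1,\ldots,x_i)$, with base case $F_1(a_1)=a_1$ (a single cycle has $a_1$ spanning trees). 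Therefore $|r_k|=F_n(a_1,\ldots,a_n)$, which gives the first isomorphism $S(Ch_n(a_1,\ldots,a_n))\cong C_{F_n(a_1,\ldots,a_n)}$.

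Finally, the second isomorphism $C_{F_n(a_1,\ldots,a_n)}\cong C_{G_n(a_1,\ldots,a_n)}$ is immediate from the previous theorem, which established the polynomial identity $G_n=F_n$. Chaining the two isomorphisms yields the statement.

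The only real subtlety, and what I would flag as the potential obstacle, is the consistency of the spanning-tree recursion in the degenerate cases where one of the $a_i$ drops to the boundary value $2$ (and subsequent evaluations of $F_{n-1}$ at an argument $1$ occur). This issue was already discussed in the passage preceding equation (\ref{eqF}), where the extended definition of $F_n$ on arguments equal to $1$ was shown to correctly match the spanning-tree count via the same deletion argument applied repeatedly. So no additional argument is required beyond pointing to that discussion; the theorem is simply the collation of the three facts above.
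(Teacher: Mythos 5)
Your proposal is correct and follows exactly the route the paper takes: the theorem is indeed just the collation of the Smith normal form reduction of $M$ to $\mathrm{diag}(1,\ldots,1,r_k)$ (giving cyclicity), the matrix--tree identification of the order with the spanning-tree count satisfying the recursion (\ref{eqF}), and the preceding theorem establishing $G_n=F_n$. Your flag about the degenerate arguments equal to $1$ is the same inaccuracy the paper itself raises and resolves in the discussion surrounding (\ref{eqF}), so nothing further is needed.
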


\newpage

\section{Acknowledgements}
The author wishes first to thank his
supervisor, Dr. Sergei V. Duzhin for valuable advices and
guidance. Secondly, the author is grateful to Vladimir O. Zolotov for some interesting conversations related to the subject of this article.
The work on this paper was supported by RFBR, grant 13-01-00383.


\end{document}